\newtheorem{theo}{Theorem}[section]
\newtheorem{prop}[theo]{Proposition}
\newtheorem{lem}[theo]{Lemma}
\newtheorem*{claim}{Claim}
\newtheorem{cor}[theo]{Corollary}
\theoremstyle{definition}
\newtheorem{rem}[theo]{Remark}
\newtheorem{prob}[theo]{Problem}
 \newcommand\Coh{\mathop{\mathrm{Coh}}\nolimits}
 \newcommand\Spec{\mathop{\mathrm{Spec}}\nolimits}
 \newcommand\OO{\mathop{\mathcal{O}}\nolimits}
  \newcommand\Hom{\mathop{\mathrm{Hom}}\nolimits}
 \newcommand\Ext{\mathop{\mathrm{Ext}}\nolimits}
\begin{document}
 \title[]{Semi-orthogonal decomposability of the derived category of a curve}
 \author{Shinnosuke OKAWA}
 \address{Graduate School of Mathematical Sciences, 
the University of Tokyo, 3-8-1 Komaba, Meguro-ku, Tokyo 153-8914, Japan.}
\email{okawa@ms.u-tokyo.ac.jp}
\date{}
\subjclass[2010]{Primary 14F05; Secondary 14E30, 18E30}
\keywords{derived category, canonical line bundle}

 \maketitle
 
 \begin{abstract}
 We show that 
 the bounded derived category of coherent sheaves on a smooth projective curve except
 the projective line admits no non-trivial semi-orthogonal decompositions. 
  \end{abstract}
 
\tableofcontents

\section{Introduction}

It is expected (and has been partially confirmed) that we can understand the minimal model program
(MMP for short) in terms of the semi-orthogonal decompositions (SOD for short) of the derived category of
coherent sheaves (see \cite{h} for the definition of derived categories of sheaves and SODs).
To be precise we expect that we can define a suitable triangulated category
for each projective variety with mild singularities, which equals to the bounded derived category of coherent sheaves
when the variety is smooth, and that each step of MMP yields a
non-trivial SOD of the category. In particular we expect that a variety whose derived category
admits no non-trivial SOD is minimal in the sense of MMP (see \cite{k} and \cite{bo}).

On the other hand, the derived category of a minimal model may admit a non-trivial SOD, even if the
canonical line bundle is ample. In fact, a Godeaux surface gives such an example (see Section \ref{higher dimensions}). 

In view of this, it is interesting to study what kind of positivity of the canonical line bundle guarantees the
non-existence of non-trivial SODs.

In this paper we prove the following theorem.

\begin{theo}\label{main}
Let $C$ be a smooth projective curve of genus at least one. Then
$D^{b}(C)$, the bounded derived category of coherent sheaves on $C$, 
admits no non-trivial semi-orthogonal decompositions.
\end{theo}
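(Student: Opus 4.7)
The plan is to assume a nontrivial semi-orthogonal decomposition $D^{b}(C)=\langle\mathcal{A},\mathcal{B}\rangle$ and derive a contradiction. Two structural facts get used throughout: $\mathcal{A}$ and $\mathcal{B}$ are admissible, hence Karoubi-closed and preserved by the Serre functor $S=(-)\otimes\omega_{C}[1]$; and since $C$ is a smooth curve, $D^{b}(C)$ is hereditary, so every object splits as $\bigoplus_{i}F_{i}[-i]$ with $F_{i}\in\Coh(C)$, and admissibility passes to individual cohomology sheaves. Note also that $\mathcal{A}\cap\mathcal{B}=0$, since the identity of any common object would lie in $\Hom(\mathcal{B},\mathcal{A})=0$.

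For an arbitrary coherent sheaf $E$, form its SOD triangle $B\to E\to A$ with $A=\bigoplus a_{i}[-i]\in\mathcal{A}$ and $B=\bigoplus b_{i}[-i]\in\mathcal{B}$. The long exact cohomology sequence combined with $H^{i}(E)=0$ for $i\neq 0$ gives $a_{i}\hookrightarrow b_{i+1}$ and $a_{i-1}\twoheadrightarrow b_{i}$ whenever $i\neq 0$, hence $a_{i}\cong b_{i+1}$ for $i\notin\{-1,0\}$; by $\mathcal{A}\cap\mathcal{B}=0$ these sheaves vanish, leaving the six-term exact sequence
\[
0\to a_{-1}\to b_{0}\to E\to a_{0}\to b_{1}\to 0
\]
together with the vanishings $\Hom(b_{i},a_{j})=\Ext^{1}(b_{i},a_{j})=0$ for $i\in\{0,1\}$, $j\in\{-1,0\}$, extracted from $\Hom(\mathcal{B},\mathcal{A}[k])=0$ for all $k$.

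Specialising to $E=\OO_{p}$, simplicity of $\OO_{p}$ forces one of $b_{0}\to\OO_{p}$, $\OO_{p}\to a_{0}$ to vanish, so the six-term sequence collapses to a single short exact sequence of the form $0\to X\to Y\to\OO_{p}\to 0$ or $0\to\OO_{p}\to X\to Y\to 0$ with $X$ and $Y$ on opposite sides of the SOD. The $\Hom$/$\Ext^{1}$ vanishings force $\chi=0$ between the $\mathcal{B}$- and $\mathcal{A}$-sheaves; a rank/degree computation yields $r(r(1-g)\pm 1)=0$ for their common rank $r$, so for $g\ge 1$ we obtain $r=0$ (the edge case $g=2,\ r=1$ is ruled out because torsion and locally free parts split on a smooth curve, producing a line-bundle direct summand that violates the $\Hom$-vanishing). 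A local socle/cosocle analysis of torsion sheaves (every nonzero torsion sheaf admits $\OO_{p}$ both as a cosocle quotient and as a socle subobject, yielding nonzero morphisms) then forces the $\mathcal{A}$- or $\mathcal{B}$-term to vanish, so $\OO_{p}$ itself lies entirely in $\mathcal{A}$ or in $\mathcal{B}$. This is where $g\ge 1$ is essential: on $\mathbb{P}^{1}$ the formula allows $r=1$ and Beilinson's extension $0\to\OO(-1)\to\OO\to\OO_{p}\to 0$ survives.

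To conclude, I would argue that all skyscrapers lie in a single component. Running the analogous six-term analysis for the indecomposable $\OO_{C}$ and exploiting Serre invariance of $\mathcal{A},\mathcal{B}$ to generate infinitely many $\omega_{C}$-twisted $\Ext$-vanishings should place $\OO_{C}$ on one side; if $\OO_{p}\in\mathcal{A}$ and $\OO_{q}\in\mathcal{B}$ occurred simultaneously, then one of $\Hom(\OO_{C},\OO_{p})=k$ or $\Ext^{1}(\OO_{q},\OO_{C})\cong\Hom(\OO_{C},\OO_{q})^{*}=k$ (by Serre duality and $\OO_{q}\otimes\omega_{C}\cong\OO_{q}$) would give a nonzero element of $\Hom(\mathcal{B},\mathcal{A}[\ast])$. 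Once all $\OO_{p}\in\mathcal{A}$, any nonzero $F\in\mathcal{B}$ satisfies $\Hom(F,\OO_{p})=0$ for every $p$, which forces $F=0$ by Nakayama, so $\mathcal{B}=0$. The main obstacle I anticipate is the $\OO_{C}$ step: unlike a skyscraper, $\OO_{C}$ is not simple, so the six-term sequence may have all four outer terms nonzero and its extensions need not split a priori, and leveraging Serre invariance with its $\omega_{C}$-twisted $\Ext$-vanishings to force the decomposition onto a single side is the technical heart of the propagation.
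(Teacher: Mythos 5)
Your analysis of the skyscraper sheaves is correct, and it amounts to reproving by hand (hereditary splitting of objects, the six-term exact sequence, Riemann--Roch, and the socle/cosocle argument, including the $g=2$, $r=1$ edge case) exactly the special case of the key lemma the paper imports from Gorodentsev--Kuleshov--Rudakov: for each closed point $p$ the decomposition triangle of $\OO_p$ degenerates to a short exact sequence of sheaves, whence $\OO_p$ lies entirely in $\mathcal{A}$ or entirely in $\mathcal{B}$. Up to that point the two arguments run in parallel. The genuine gap is the step you flag yourself: you never actually place $\OO_C$ in one of the two components, and without that the propagation from ``each skyscraper lies on one side'' to ``all skyscrapers lie on the same side'' does not go through; as written, the proof is incomplete. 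Note also that your preliminary claim that $\mathcal{A}$ and $\mathcal{B}$ are preserved by the Serre functor is false in general (consider $\langle\OO(-1),\OO\rangle$ on $\mathbb{P}^1$, where $S(\OO)=\OO(-2)[1]$); only $S(\mathcal{B})\subseteq\mathcal{A}^{\perp}$ and $S^{-1}(\mathcal{A})\subseteq{}^{\perp}\mathcal{B}$ hold, so the ``infinitely many $\omega_C$-twisted vanishings'' you hope to extract from Serre invariance are not available.

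The missing step closes much more cheaply than you anticipate, and the paper's order of operations shows how. If every skyscraper lies in $\mathcal{B}$, then $\mathcal{A}=0$ by the spanning-class property of points. Otherwise fix $\OO_p\in\mathcal{A}$; then every sheaf in $\mathcal{B}$ is torsion, since a sheaf with full support admits a nonzero map to $\OO_p$. Now run your six-term sequence for an arbitrary torsion-free $E$ (there is no need to single out $\OO_C$, which is indeed awkward because it is not simple): $b_0/a_{-1}$ is a torsion subsheaf of $E$, hence zero, so $a_{-1}\cong b_0\in\mathcal{A}\cap\mathcal{B}=0$, leaving $0\to E\to a_0\to b_1\to 0$ with $b_1$ torsion. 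If $b_1\neq 0$, pick $q$ in its support; the cosocle surjection $b_1\twoheadrightarrow\OO_q$ forces $\OO_q\in\mathcal{B}$ (else $\Hom(\mathcal{B},\mathcal{A})\neq 0$), and then $\Ext^1(\OO_q,b_1)\cong\Hom(b_1,\OO_q\otimes\omega_C)^{\vee}=\Hom(b_1,\OO_q)^{\vee}\neq 0$ contradicts the surjection $\Ext^1(\OO_q,a_0)\twoheadrightarrow\Ext^1(\OO_q,b_1)$ together with $\Ext^1(\OO_q,a_0)=0$. Hence $b_1=0$, every torsion-free sheaf lies in $\mathcal{A}$, and the spanning class of torsion-free sheaves kills $\mathcal{B}$. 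This is precisely the paper's route; grafting it onto your six-term machinery completes your proof.
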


It is well known that the derived category of a smooth projective variety with trivial canonical line bundle
admits no non-trivial SODs,
since in such a case the Serre functor is just a shift by the dimension.
Therefore the case $g(C)\ge 2$ is essential.

This result seems to be believed by specialists, but the author could not
find it in the literature.

In view of Theorem \ref{main}, it is natural to ask if the similar results hold in higher dimensions:
\begin{prob}\label{problem}
Let $X$ be a smooth projective variety whose canonical line bundle
is globally generated. Does $D^{b}(X)$ admit a non-trivial SOD?
\end{prob}

As a partial answer, for such $X$ we check that $D^{b}(X)$ has no exceptional object (see Section \ref{higher dimensions}).
\section{Proof}\label{Proof}
In this section we give a proof for Theorem \ref{main}.

The following lemma ($=$\cite[Lemma 7.2]{gkr}) is the key ingredient of the proof :

\begin{lem}\label{lem:GKR}
Let $C$ be a smooth projective curve of genus $g(C) \geq 1$. Suppose $E \in \Coh (C)$ is included in a triangle
$$Y \to E \to X \to Y [1]$$
with $Hom^{\leq 0} (Y, X) = 0$. Then $X, Y \in \Coh (C)$.  
\end{lem}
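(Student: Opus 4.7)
My plan is to exploit the fact that $\Coh(C)$ has global dimension $1$, so that every object $F \in D^{b}(C)$ splits (non-canonically) as $\bigoplus_{i} H^{i}(F)[-i]$. Writing $X_{i} := H^{i}(X)$ and $Y_{j} := H^{j}(Y)$, we may identify $X \cong \bigoplus_{i} X_{i}[-i]$ and $Y \cong \bigoplus_{j} Y_{j}[-j]$. Since $\Ext^{m}(Y_{j}, X_{i})$ vanishes for $m \notin \{0,1\}$, the hypothesis $\Hom^{\leq 0}(Y, X) = 0$ decomposes into the separate conditions
\[
\Hom(Y_{j}, X_{i}) = 0 \ \text{for all}\ i \leq j, \qquad \Ext^{1}(Y_{j}, X_{i}) = 0 \ \text{for all}\ i < j.
\]

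Next I would analyse the long exact sequence of cohomology sheaves attached to $Y \to E \to X \to Y[1]$. Since $H^{i}(E) = 0$ for $i \neq 0$, this yields isomorphisms $X_{i} \cong Y_{i+1}$ for every $i \notin \{-1, 0\}$ together with the five-term exact sequence
\[
0 \to X_{-1} \to Y_{0} \to E \to X_{0} \to Y_{1} \to 0.
\]
If $a \geq 1$ were the largest index with $X_{a} \neq 0$, these isomorphisms would force $Y_{a+1} \cong X_{a}$ to be the top nonzero cohomology of $Y$, and then the identity would give a nonzero element of $\Hom(Y_{a+1}, X_{a})$, contradicting the vanishing above applied with $j = a+1$, $i = a$. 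A symmetric argument at the bottom rules out $Y_{b} \neq 0$ for $b \leq -1$ and hence $X_{i} = 0$ for $i \leq -2$. Thus only $X_{-1}, X_{0}, Y_{0}, Y_{1}$ can be nonzero.

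For these four remaining sheaves the ``identity map'' trick is no longer available, and here I would invoke the key geometric input: on a smooth projective curve with $g(C) \geq 1$, \emph{no} nonzero coherent sheaf $F$ satisfies $\Ext^{1}(F, F) = 0$. For $F$ locally free this follows from Riemann--Roch applied to $F^{\vee} \otimes F$, and for $F$ torsion it follows from Serre duality combined with $F \otimes \omega_{C} \cong F$; the general case splits as torsion plus locally free. Granted this, the surjection $X_{0} \twoheadrightarrow Y_{1}$ together with $\Ext^{1}(Y_{1}, X_{0}) = 0$ implies via the long exact sequence of $\Ext(Y_{1}, -)$ that $\Ext^{1}(Y_{1}, Y_{1}) = 0$, forcing $Y_{1} = 0$. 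Dually, applying $\Hom(-, X_{-1})$ to the short exact sequence $0 \to X_{-1} \to Y_{0} \to Q \to 0$ (where $Q := Y_{0}/X_{-1}$) together with $\Hom(Y_{0}, X_{-1}) = \Ext^{1}(Y_{0}, X_{-1}) = 0$ yields $\Ext^{1}(X_{-1}, X_{-1}) = 0$, forcing $X_{-1} = 0$.

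The main obstacle I anticipate is this last step. The outer-degree cohomology is killed cheaply by the identity morphism on $X_{a} \cong Y_{a+1}$, but eliminating $X_{-1}$ and $Y_{1}$ genuinely requires the positive-genus hypothesis through the non-existence of rigid coherent sheaves; recognising that $g(C) \geq 1$ enters precisely here is the main conceptual point. Consistently, on $\mathbb{P}^{1}$ every line bundle is rigid, permitting the well-known semi-orthogonal decomposition of $D^{b}(\mathbb{P}^{1})$ and showing that the conclusion must fail in genus zero.
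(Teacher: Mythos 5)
Your argument is correct. Note, however, that the paper does not prove this lemma at all: it is quoted verbatim as \cite[Lemma 7.2]{gkr}, so there is no in-paper proof to compare against, and what you have written is a self-contained substitute for that citation. The proof is sound at every step: the formality of objects of $D^{b}(C)$ (valid since $\Coh(C)$ is hereditary) correctly translates $\Hom^{\leq 0}(Y,X)=0$ into $\Hom(Y_j,X_i)=0$ for $i\leq j$ and $\Ext^1(Y_j,X_i)=0$ for $i<j$; the cohomology long exact sequence of the triangle gives the isomorphisms $X_i\cong Y_{i+1}$ ($i\neq -1,0$) and the five-term sequence; the identity-map trick kills everything outside degrees $\{-1,0\}$ for $X$ and $\{0,1\}$ for $Y$; and the two $\Ext$-sequences correctly reduce $Y_1=0$ and $X_{-1}=0$ to the non-existence of nonzero rigid sheaves on a curve of genus $\geq 1$ (using $\Ext^2=0$ to run the sequences). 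That last fact is also correctly justified: the torsion part of a sheaf on a smooth curve splits off because $\Ext^1(L,T)=H^1(L^{\vee}\otimes T)=0$ for $L$ locally free and $T$ torsion, Serre duality handles the torsion summand, and Riemann--Roch gives $h^1(F^{\vee}\otimes F)\geq h^0(F^{\vee}\otimes F)-r^2(1-g)\geq 1$ for $F$ locally free of rank $r$ when $g\geq 1$. You have also correctly identified the only place where $g(C)\geq 1$ enters, which matches the paper's Remark exhibiting the failure on $\mathbb{P}^1$ via the rigid object $\OO_C(-p)[1]$.
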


\begin{cor}\label{key}
Let $C$ be a smooth projective curve of genus $g(C) \geq 1$, and $D^{b}(C)=\langle\mathcal{A},\mathcal{B}\rangle$
be a SOD of $D^{b}(C)$. Then for any $E\in \Coh(C)$,
there exist coherent sheaves $b\in\mathcal{B}\cap{\Coh{(C)}}$ and $a\in\mathcal{A}\cap{\Coh{(C)}}$,
and an exact sequence of sheaves
\begin{equation*}
0\to b\to E\to a\to 0.
\end{equation*}
\end{cor}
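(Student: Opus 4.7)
The plan is to feed the decomposition triangle produced by the SOD directly into Lemma~\ref{lem:GKR} and then recognize the resulting triangle as a short exact sequence of sheaves via the standard $t$-structure on $D^{b}(C)$.

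Concretely, for $E\in\Coh(C)\subset D^{b}(C)$ the SOD $D^{b}(C)=\langle\mathcal{A},\mathcal{B}\rangle$ supplies a distinguished triangle
\[
b\to E\to a\to b[1]
\]
with $b\in\mathcal{B}$ and $a\in\mathcal{A}$. Because $\mathcal{A}$ is a triangulated subcategory, $a[k]\in\mathcal{A}$ for every $k\in\bz$, so the defining semi-orthogonality $\Hom(\mathcal{B},\mathcal{A})=0$ forces $\Hom(b,a[k])=0$ for all $k$; in particular $\Hom^{\leq 0}(b,a)=0$. This is exactly the hypothesis of Lemma~\ref{lem:GKR} with $Y=b$ and $X=a$, so the lemma delivers $a,b\in\Coh(C)$.

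To pass from the triangle to an exact sequence, I would observe that all three of $b$, $E$, $a$ now lie in the heart $\Coh(C)$ of the standard $t$-structure on $D^{b}(C)$. The associated long exact sequence of cohomology sheaves therefore kills every term outside degree zero and collapses to a short exact sequence
\[
0\to b\to E\to a\to 0
\]
in $\Coh(C)$. Combined with $b\in\mathcal{B}$ and $a\in\mathcal{A}$ already noted, this is precisely the decomposition in the statement.

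There is no substantial obstacle here: once Lemma~\ref{lem:GKR} is granted, the argument amounts to checking that the SOD projection triangle satisfies its hypothesis and then invoking the standard fact that a distinguished triangle all of whose vertices lie in the heart of a $t$-structure is a short exact sequence in the abelian category. The purpose of the corollary is simply to repackage the lemma into the bookkeeping form needed for the proof of Theorem~\ref{main}.
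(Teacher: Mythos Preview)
Your argument is correct and is precisely the intended one: the paper does not spell out a proof of Corollary~\ref{key} at all, treating it as an immediate consequence of Lemma~\ref{lem:GKR}, and your write-up supplies exactly the details one would fill in---feed the SOD triangle into the lemma using $\Hom(\mathcal{B},\mathcal{A})=0$, then read off the short exact sequence from the heart.
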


\begin{proof}[Proof of Theorem \ref{main}]
Let $D^{b}(C)=\langle\mathcal{A},\mathcal{B}\rangle$ be a SOD of $D^{b}(C)$. We show that
either $\mathcal{A}$ or $\mathcal{B}$ is trivial.

Pick a closed point $x\in X$.
By Corollary \ref{key}, there exist $b\in\mathcal{B}$, $a\in\mathcal{A}$ such that both of them are sheaves and
there exists an exact sequence
\begin{equation*}
0\to b\to \OO_{x}\to a\to 0.
\end{equation*}
Therefore either 
$\OO_{x}\in\mathcal{A}$ or $\OO_{x}\in\mathcal{B}$ should hold.
Note that only one of them holds, since $\mathcal{A}\subset\mathcal{B}^{\perp}$.
Summing up we obtain the decomposition of the set of closed points of the curve $C$
\begin{equation*}
C(\Spec{(\mathbb{C})})=C_{\mathcal{A}}\amalg C_{\mathcal{B}},
\end{equation*}
where $C_{\mathcal{A}}$ (resp. $C_{\mathcal{B}}$) denotes the set of closed points belonging to $\mathcal{A}$
(resp. $\mathcal{B}$).

Suppose that $C_{\mathcal{B}}=C(\Spec{(\mathbb{C})})$. Then $\mathcal{A}$
should be trivial, since the set of closed points forms a spanning class \cite[Proposition 3.17]{h}.
In this case we are done.

Suppose that at least one closed point belongs to $\mathcal{A}$. Then we see the following
\begin{claim}
Any coherent sheaf in $\mathcal{B}$ must be torsion.
\end{claim}
\begin{proof}
Otherwise the support of the sheaf coincides with the whole variety $C$, hence there exists a
non-trivial morphism from the sheaf to a closed point which belongs to $\mathcal{A}$.
This is a contradiction.
\end{proof}

Now we show that any torsion free sheaf belongs to $\mathcal{A}$. In fact,
let $E$ be a torsion free sheaf. As before, we have an exact sequence
\begin{equation*}
0\to b\to E\to a\to 0.
\end{equation*}
Since $E$ is torsion free, so is $b$. Combined with the claim, we see $b$ must be zero, hence
$a=E$.

The set of torsion free sheaves forms a spanning class \cite[Corollary 3.19]{h},
hence $\mathcal{B}$ has to be trivial.
This concludes the proof.

\end{proof}

\begin{rem}
\begin{enumerate}
\item
Corollary \ref{key} is the only place where we used the assumption that $C$ is a smooth projective curve of
genus at least one. If $X$ is a smooth projective variety satisfying the conclusion of Corollary \ref{key},
the same argument as above holds: i.e. in such a case $D^{b}(X)$ has no non-trivial SODs.

\item
As is well known, $D^{b}(\mathbb{P}^1)$ has a non-trivial SOD \cite[Corollary 8.29]{h}.
In fact we can check that Lemma \ref{lem:GKR} does not hold for $\mathbb{P}^1$.

Set $C=\mathbb{P}^1$, and fix a closed point $p\in C$. From the exact sequence
\begin{equation*}
0\to\OO_C(-p)\to\OO_C\to\OO_{p}\to 0,
\end{equation*}
we obtain the following triangle
\begin{equation*}
\OO_C\to\OO_{p}\to\OO_C(-p)[1]\to\OO_C[1].
\end{equation*}
It is easy to check that $\Ext^{i} (\OO_C, \OO_C(-p)[1]) = 0$ holds for all $i\in\mathbb{Z}$. Thus
we see that the conclusion of Lemma \ref{lem:GKR} does not hold for $\mathbb{P}^{1}$.
\end{enumerate}
\end{rem}

\section{Higher dimensions}\label{higher dimensions}
In this section we discuss the higher dimensional case.

First of all, we point out that the ampleness of the canonical line bundle does not guarantee the
indecomposability of the derived category.
In fact, let $X$ be a Godeaux surface (see \cite[X.3(4)]{b}).
It is a smooth projective surface with ample canonical line bundle and satisfies $h^1(\OO_X)
=h^2(\OO_X)=0$.

Therefore every line bundle on $X$ is an exceptional object, hence the triangulated subcategory
generated by one of them gives a non-trivial
SOD of $D^b(X)$ (see \cite[Chapter 1 Section 4]{h}).

Finally, as a partial answer to Problem \ref{problem} we check that the global generation
of the canonical line bundle implies the non-existence of exceptional objects.

\begin{prop}\label{no exceptional object}
Under the same assumption as in Problem \ref{problem}, $D^{b}(X)$ has no exceptional object.
\end{prop}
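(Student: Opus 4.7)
The approach is to use Serre duality to derive a contradiction between the exceptional hypothesis and the global generation of $\omega_X$. Let $n = \dim X$, which we may assume to be at least one (the case $n = 0$ is degenerate, as any nonzero object of $D^{b}(\mathrm{pt})$ is exceptional). If $E \in D^{b}(X)$ is exceptional, then $\Ext^{n}(E, E) = 0$ since $n \neq 0$, and Serre duality yields
$$\Hom(E, E \otimes \omega_X) \cong \Ext^{n}(E, E)^{*} = 0.$$
It therefore suffices to exhibit a single nonzero morphism $E \to E \otimes \omega_X$ in $D^{b}(X)$ using the global generation of $\omega_X$, as this will contradict the vanishing above.

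To produce such a morphism, I would fix an index $i$ with $\mathcal{H}^{i}(E) \neq 0$, choose a closed point $x$ in the support of $\mathcal{H}^{i}(E)$, and use global generation to select a section $s \in H^{0}(X, \omega_X)$ with $s(x) \neq 0$. Viewing $s$ as a morphism $\OO_X \to \omega_X$ and tensoring with $\mathrm{id}_E$ produces a morphism $\varphi \colon E \to E \otimes \omega_X$ in $D^{b}(X)$. Since $\omega_X$ is locally free, tensoring with it is exact, so the induced map on $i$-th cohomology sheaves is $\mathcal{H}^{i}(\varphi) = \mathrm{id}_{\mathcal{H}^{i}(E)} \otimes s$. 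On the open set $U = \{s \neq 0\}$, which contains $x$, the section $s$ trivialises $\omega_X|_U$ and $\mathcal{H}^{i}(\varphi)|_U$ becomes an isomorphism; since $x$ lies in both $U$ and the support of $\mathcal{H}^{i}(E)$, this forces $\mathcal{H}^{i}(\varphi) \neq 0$, and hence $\varphi \neq 0$.

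The main obstacle is precisely the nonvanishing of $\varphi$ in the derived category, and the use of global generation is pivotal there: without it one cannot guarantee a global section of $\omega_X$ that fails to vanish at a prescribed point, and the construction collapses. As a sanity check, the argument specialises correctly to the Calabi--Yau case, where $\omega_X \cong \OO_X$ is globally generated by the unit section and one recovers the well-known fact that the derived category of a positive-dimensional Calabi--Yau variety admits no exceptional objects.
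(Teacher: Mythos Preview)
Your argument is correct and follows the same approach as the paper: Serre duality reduces the claim to showing $\Hom(E, E\otimes\omega_X)\neq 0$ for every nonzero $E$, and multiplication by a suitably chosen global section of $\omega_X$ supplies the nonzero morphism. The only difference is in how nonvanishing of $\varphi=\mathrm{id}_E\otimes s$ is verified: the paper isolates the lowest nonzero cohomology via the truncation triangle $\tau_{\le m}E\to E\to\tau_{\ge m+1}E$ and argues by contradiction with a morphism of triangles, whereas your direct observation that $\mathcal{H}^{i}(\varphi)=\mathcal{H}^{i}(E)\otimes s$ (using flatness of $\omega_X$) is a bit more streamlined and reaches the same conclusion.
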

\begin{proof}
For any non-zero object $F$ in $D^b(X)$,
\begin{equation*}
\Hom(F,F[\dim{X}])^{\vee}\cong\Hom(F,F\otimes K_X)\not=0
\end{equation*}
holds due to the following
lemma\footnote{The author learned the lemma, together with its proof, from Dr. Kotaro Kawatani.}:
\begin{lem}\label{Kawatani}
Let $F$ be a non-trivial object in $D^{b}(X)$, and $L$ be a globally generated line bundle. Then
\begin{equation*}
\Hom(F,F\otimes L)\not=0
\end{equation*}
holds.
\end{lem}
\begin{proof}
When $F$ is a shift of a sheaf, we can find a section $s\in H^0(X,L)$ such that
$\otimes s:F\to F\otimes L$ is a non-trivial homomorphism.

In general, set $m=\min\{i \ |H^i(F)\not= 0\}$ and consider the following standard triangle
\begin{equation*}
\tau_{\le m}(F)\to F\to \tau_{\ge m+1}(F)\to\tau_{\le m}(F)[1],
\end{equation*}
where $\tau_{\le m}$ (resp. $\tau_{\ge m+1}$) denotes the lower truncation of $F$ at degree $m$
(resp. upper truncation at degree $m+1$).

Since  $\tau_{\le m}(F)$ is isomorphic to a shift of a sheaf, we can find $s\in H^0(X,L)$ as above
for $\tau_{\le m}(F)$.
Consider the following morphism of triangles.

\[
\xymatrix{
\tau_{\ge m+1}(F)[-1]\ar[r]\ar[d]_{\sigma_{\ge m+1}[-1]} & \tau_{\le m}(F)\ar[r] \ar[d]_{\sigma_{\le m}}
&F\ar[r] \ar[d]_{\sigma} &\tau_{\ge m+1}(F)\ar[d]_{\sigma_{\ge m+1}}  \\
\tau_{\ge m+1}(F)\otimes L[-1]\ar[r] & \tau_{\le m}(F)\otimes L\ar[r] &F\otimes L\ar[r]&\tau_{\ge m+1}(F)\otimes L \\}\]

In the diagram above, four vertical arrows are defined by taking tensor products with the section $s$, so that
the morphism $\sigma_{\le m}$ is non-trivial.

Suppose that $\sigma=0$. Then $\sigma_{\le m}$ factors through a morphism from
$\tau_{\le m}(F)$ to $\tau_{\ge m+1}(F)\otimes L[-1]$, which is zero since $\tau_{\ge m+1}(F)\otimes L[-1]$
has trivial cohomologies up to degree $m+1$. Thus we obtain a contradiction, which means the non-triviality
of $\sigma$.
\end{proof}
\end{proof}
\section*{Acknowledgements}
The author would like to thank his advisor Professor Yurjio Kawamata for asking him
about higher dimensional cases and Dr. Kotaro Kawatani for useful discussions,
informing him of Lemma \ref{Kawatani} with its proof and kindly allowing him to include them in this paper.
He would also like to thank Professor Hokuto Uehara
for useful discussions, and Professors Alexei Bondal and Yukinobu Toda
for answering his question.

The author is supported by the Grant-in-Aid for Scientific Research
(KAKENHI No. 22-849) and the Grant-in-Aid for JSPS fellows.


 \end{document}